\documentclass[10pt,a4paper]{article}
\usepackage[a4paper,hmargin={1.5cm,1.5cm},vmargin={2cm,2cm}]{geometry}
\usepackage[utf8]{inputenc}
\usepackage[affil-sl]{authblk}

\usepackage[backend=biber,style=ieee,citestyle=numeric-comp,sorting=nty]{biblatex}
\usepackage[dvipsnames]{xcolor}
\usepackage[singlelinecheck=false]{caption}
\usepackage[linesnumbered,ruled]{algorithm2e}
\usepackage[colorlinks=true,linkcolor=Rhodamine,citecolor=Rhodamine,urlcolor=Rhodamine]{hyperref}

\usepackage{url}
\usepackage{hyphenat}
\usepackage{parskip}
\usepackage{tabularray}
\usepackage{amsmath}
\usepackage{upgreek}
\usepackage{breqn}
\usepackage{mathtools}
\usepackage{amsfonts}
\usepackage{amscd}
\usepackage{amssymb}
\usepackage{tikz}
\usepackage{graphicx}
\usepackage{pifont}
\usepackage{mathrsfs}
\usepackage{fancyhdr}
\usepackage{subfiles}

\usepackage{amsthm}
\theoremstyle{definition}
\newtheorem{theorem}{Theorem}
\newtheorem{definition}[theorem]{Definition}
\newtheorem{remark}[theorem]{Remark}

\newtheorem{lemma}[theorem]{Lemma}
\newtheorem{proposition}[theorem]{Proposition}

\definecolor{primary}{HTML}{6200ea}
\definecolor{secondary}{HTML}{00e5ff}

\newcommand{\Iset}{\mathcal{I}}
\newcommand{\Jset}{\mathcal{J}}
\newcommand{\R}{\mathbb{R}}
\newcommand{\B}{\{0,1\}}
\newcommand{\ph}{\varphi}
\newcommand{\BFMWSAT}{\textsc{BF-MWSAT}}

\begin{document}

\font\titleFont=cmr12 at 17pt
\title{{\titleFont Bipolar Fuzzy Minimum-Weight Satisfiability under Continuous t-Norms: A Clause-Witness Branch-and-Bound Framework}}

\def\correspondingauthor{\footnote{Corresponding Author: a.ghodousian@ut.ac.ir (A. Ghodousian)}}

\author[1]{Amin Ghodousian \correspondingauthor{}}
\author[2]{Mohammad Sedigh Chopannavaz}

\affil[1]{Faculty of Engineering Science, College of Engineering, University of Tehran, P.O.Box 11365-4563, Tehran, Iran.}
\affil[2]{Department of Engineering Science, College of Engineering, University of Tehran, Tehran, Iran.}

\date{}

% Bold Proof
\makeatletter
\renewenvironment{proof}[1][\proofname]{%
    \par\pushQED{\qed}\normalfont%
    \topsep6\p@\@plus6\p@\relax
    \trivlist\item[\hskip\labelsep\bfseries#1\@addpunct{.}]%
    \ignorespaces
}{%
    \popQED\endtrivlist\@endpefalse
}
\makeatother

\maketitle

% ==================================== Abstract ====================================
\section*{Abstract}\label{sec_abs}
Minimum-weight satisfiability asks for a satisfying Boolean assignment of minimum cost, while many fuzzy decision and knowledge systems operate with graded positive and negative relations rather than crisp literals. This paper introduces a bipolar fuzzy minimum-weight satisfiability problem in which positive and negative clause--variable relations, target clause levels, and variable states are represented in the unit interval. The clause requirements are expressed by bipolar fuzzy relational equalities defined with an arbitrary continuous t-norm. Classical minimum-weight CNF satisfiability is recovered as a crisp special case, independently of the selected t-norm, and an optimal Boolean solution is shown to exist in that specialization even though the proposed formulation uses continuous variables. For the graded problem, clause satisfaction is characterized through admissible variable domains and effective clause-witness sets. This structure leads to an exact clause-witness branch-and-bound method with preprocessing, propagation, lower-point closure, and a disjoint-witness packing bound. Computational experiments verify the solver against explicit witness enumeration, mixed-integer baselines, and external SATLIB formulas; on a focused graded ablation set, the packing bound reduces the median number of explored nodes by about 45 percent relative to the single-row witness bound.

\textbf{Keywords}: Bipolar fuzzy relational equations, minimum-weight satisfiability, SAT, continuous t-norm, branch-and-bound.

% ==================================== Introduction ====================================
\section{Introduction}\label{sec_intro}
The Boolean satisfiability problem (SAT) is one of the central problems of theoretical computer science and combinatorial optimization. Its importance follows both from its foundational complexity status and from its role as a general modelling language for discrete decision problems. Cook's theorem established the central position of SAT in NP-completeness theory \cite{cook1971}, while the Davis--Putnam and Davis--Logemann--Loveland procedures laid the algorithmic foundations for complete SAT search \cite{davis1960,davis1962}. Modern SAT solving has subsequently developed into a mature computational field through conflict-driven clause learning, efficient propagation, restart policies, learned-clause management, and implementation-level engineering \cite{marquessilva1999,moskewicz2001,marquessilva2021}. Comprehensive treatments of Boolean functions and satisfiability describe how a wide range of combinatorial models can be represented in propositional form and solved by exploiting this structure \cite{crama2011,knuth2015}.

Optimization variants of satisfiability arise when feasibility alone is insufficient. In minimum-weight satisfiability, a nonnegative cost is associated with setting each variable to true and the objective is to find a satisfying assignment of minimum total cost. The unit-weight version is commonly known as Min-Ones SAT, and its restricted forms are closely connected with familiar covering problems; for example, Min-Ones 2-SAT has a strong structural relation with vertex cover \cite{misra2013}. MaxSAT and weighted MaxSAT pursue a related but different goal by maximizing the amount of satisfied Boolean information, and a substantial literature has developed exact, SAT-based, core-guided, and hybrid algorithms for these optimization problems \cite{ansotegui2013,morgado2013,cai2020}. These developments illustrate an important lesson for algorithm design: a useful formulation should expose the combinatorial structure of the problem rather than merely pass the model to a generic optimizer. SAT preprocessing provides a particularly clear example. Unit propagation, clause elimination, implication-based simplification, and related techniques can shrink an instance substantially before the main search begins \cite{jarvisalo2010,heule2011,biere2021preprocessing}.

Classical SAT assumes Boolean truth values and crisp literal occurrences. Fuzzy satisfiability studies relax this setting by allowing graded truth values and fuzzy logical semantics. Different lines of work have been developed according to the underlying fuzzy logic and the intended optimization criterion. El Halaby and Abdalla extended MaxSAT to Lukasiewicz logic and proposed disjunctive-linear, mixed-integer linear, and weighted-constraint encodings \cite{halaby2016}. Recent work has further examined the computational complexity of maximal, weighted, and partial weighted satisfiability in Lukasiewicz logic \cite{lapenta2026}, while mixed-integer nonlinear formulations have been proposed for solving satisfiability under several fuzzy-logical semantics \cite{castro2026}. These models are valuable when the connectives of a fuzzy propositional logic provide the intended semantics. The present paper takes a different route: it starts from a bipolar fuzzy relational system and treats its rows as graded clause requirements. Thus, the proposed model is not intended to replace the established semantics of fuzzy propositional logics; it defines a satisfiability-type optimization problem whose native structure is a system of bipolar fuzzy relational equalities.

Fuzzy relational equations (FREs) provide a well-developed mathematical framework for inverse relational reasoning. Sanchez introduced composite fuzzy relation equations and their use in fuzzy modelling \cite{sanchez1976}; subsequent monographs and surveys established their resolution theory, structural properties, and role in knowledge engineering \cite{dinola1989,dinola1991,lifang2009}. Optimization over FRE solution sets has also received sustained attention. Linear objectives under max--min, max--product, and general max--t-norm compositions have been studied through structural, integer-programming, and analytical approaches \cite{fangli1999,loetamonphong2001,guuwu2002,guuwu2010}. Connections between fuzzy relational equations and covering problems further show that relational equations can encode recognizable combinatorial structures rather than only fuzzy-control calculations \cite{markovskii2005}.

Bipolar fuzzy relational equations (BFREs) extend this framework by allowing a variable and its complement to participate simultaneously. This feature is particularly relevant to clause systems, because a Boolean clause may contain both positive literals and negated literals. Linear optimization with bipolar max--min constraints was investigated by Freson, De Baets, and De Meyer \cite{freson2013}; related optimization and resolution results were subsequently obtained for Lukasiewicz, Hamacher, product, strict, Archimedean, and more general continuous t-norm settings \cite{liliu2014,liuluwu2016,aliannezhadi2016,cornejo2017,cornejo2021,ghodousian2023,ghodousian2025arch}. The generalized continuous-t-norm framework in \cite{ghodousian2025core} characterizes feasibility through componentwise admissibility and activation sets, represents the complete feasible region as a finite union of compact regions, supplies simplification rules, and identifies a finite family containing a global optimum for coordinatewise monotone objectives.

The most direct previous connection between BFREs and SAT is due to Li and Jin \cite{lijin2016}. For bipolar max--min equations, they revealed a polynomial reduction from classical Boolean satisfiability to consistency of a bipolar equation system and proved NP-completeness of the consistency problem. Their construction uses crisp positive and negative incidence matrices and a unit right-hand side, so a row is satisfied when one positive variable equals one or one negated variable equals one. They also characterized bipolar max--min systems by integer linear inequalities. This result is fundamental for the present study because it establishes that SAT is not merely an analogy for bipolar relational equations. At the same time, it leaves a distinct optimization question open: how should a minimum-cost, genuinely graded bipolar clause system be formulated and solved when the relational coefficients and target clause levels are fuzzy and the composition is an arbitrary continuous t-norm?

A direct application of the general BFRE optimization framework would enumerate admissible relational regions and compare their region-wise optimal points. Such a procedure is exact, but the number of admissible assignments may grow exponentially \cite{ghodousian2025core}. Branch-and-bound itself is not new to BFRE optimization: a modified branch-and-bound method has, for example, been developed for linear models with max-parametric Hamacher BFRE constraints \cite{aliannezhadi2016}. The novelty pursued here is therefore not the generic use of branch-and-bound, but a SAT-specific search state built from clause witnesses and domain intersections under the arbitrary-continuous-t-norm formulation. In a satisfiability setting the general feasible-region representation has additional combinatorial meaning: every row must choose at least one variable whose positive or negative branch attains the prescribed row level. We call such a variable a \emph{clause witness}. This interpretation permits the search to be organized directly over witness choices. It also makes SAT-style propagation and branch-and-bound possible: partial witness assignments shrink variable domains, inconsistent branches can be detected by empty intersections, forced witnesses can be propagated, and the monotone minimum-cost objective supplies valid lower bounds at partial nodes. In particular, rows whose current witness-variable sets are disjoint impose additive cost increments, yielding a packing bound that is stronger than treating one unresolved row at a time.

Motivated by these observations, this paper introduces the \emph{bipolar fuzzy minimum-weight satisfiability} problem (\BFMWSAT) and develops a specialized exact solution procedure. The main contributions are summarized as follows.

\begin{itemize}
    \item A minimum-weight satisfiability problem is formulated directly over fuzzy positive and negative relation matrices, a fuzzy requirement vector, and an arbitrary continuous t-norm. The fuzzy quantities are treated as inputs to the relational reasoning system; their external fuzzification or defuzzification is outside the scope of the model.
    \item Classical minimum-weight CNF satisfiability is proved to be a crisp specialization of the proposed model for \emph{every} continuous t-norm. In this specialization, a Boolean optimum always exists even though the BFRE formulation uses continuous variables in the unit interval.
    \item The feasible set is expressed in SAT-oriented terms using admissible variable domains, effective clause-witness sets, and compatible witness assignments. Only the structural results required for this problem are developed here; the general continuous-BFRE theory is available in \cite{ghodousian2025core}.
    \item An exact branch-and-bound method is developed over clause witnesses. It combines feasibility preprocessing, forced-witness propagation, domain intersection, lower-point closure, a disjoint-witness packing bound, fail-first branching, and cost-guided branch ordering. The method searches the same globally valid solution structure without explicitly generating every complete admissible region in advance.
    \item A detailed numerical example and a reproducible implementation evaluate the method numerically. The experiments include exactness checks against explicit witness enumeration, mixed-integer baselines, preprocessing tests for three continuous t-norms, a focused ablation of the search mechanisms, and scalability tests on crisp and genuinely graded instances.
\end{itemize}

The rest of the paper is organized as follows. Section \ref{sec_problem} introduces minimum-weight CNF satisfiability and the proposed bipolar fuzzy extension. Section \ref{sec_structure} develops the crisp-specialization results and the clause-witness characterization of the graded feasible set. Section \ref{sec_algorithmic} presents preprocessing and the branch-and-bound framework, and Section \ref{sec_algorithm} summarizes the complete method as an algorithm. Section \ref{sec_example} gives a step-by-step numerical example. Section \ref{sec_computational} reports the computational experiments, and Section \ref{sec_conclusion} concludes the paper.

% ==================================== Problem setting ====================================
\section{Problem setting and bipolar fuzzy formulation}\label{sec_problem}
\subsection{Minimum-weight CNF satisfiability}
Let $y_1,\ldots,y_n$ be Boolean variables and let
\[
\mathcal{C}=C_1\wedge C_2\wedge\cdots\wedge C_m
\]
be a formula in conjunctive normal form (CNF). For each clause $C_i$, let $P_i\subseteq\{1,\ldots,n\}$ contain the indices of variables occurring positively and let $N_i\subseteq\{1,\ldots,n\}$ contain the indices occurring negatively. Thus,
\[
C_i=\bigvee_{j\in P_i}y_j\;\vee\!\bigvee_{j\in N_i}\neg y_j.
\]
We use the convention that the maximum over an empty index set is zero. With this convention, an empty clause is correctly represented as unsatisfiable in the crisp relational construction below.

Given nonnegative variable costs $c_j$, the classical minimum-weight satisfiability problem seeks a satisfying Boolean assignment of minimum total cost,
\begin{equation}\label{eq_mwsat}
\min\left\{\sum_{j=1}^{n}c_jy_j:\;y\in\B^n,\; y\models\mathcal{C}\right\}.
\end{equation}
The unit-weight case is Min-Ones SAT. The nonnegative-cost form is the natural one for the minimum-weight interpretation and, more importantly for the present paper, makes the objective coordinatewise non-decreasing. If separate costs are assigned to the true and false states of a variable, the cheaper state can be absorbed into a constant and the variable can be complemented when necessary, yielding an equivalent nonnegative variable-weight representation.

\subsection{Graded bipolar clauses}
We now replace the crisp incidence description by fuzzy relational data. Let $\Iset=\{1,\ldots,m\}$ index clause requirements and $\Jset=\{1,\ldots,n\}$ index decision variables. The input consists of
$
A^+=(a^+_{ij})_{m\times n},~ A^-=(a^-_{ij})_{m\times n},~ b=(b_i)_{m\times1},
$
with all entries in $[0,1]$. The coefficient $a^+_{ij}$ is the positive relational grade between variable $j$ and clause requirement $i$, while $a^-_{ij}$ is its negative or complementary relational grade. The component $b_i$ is the prescribed (target) grade of row $i$. These quantities are assumed to have already been supplied in fuzzy form. Their construction from measurements, linguistic statements, or expert assessments belongs to an external modelling layer and is not part of the optimization problem studied here.

Let $\ph$ be a continuous t-norm and let $x=(x_1,\ldots,x_n)^T\in[0,1]^n$. Variable $x_j$ participates through both $x_j$ and its standard complement $1-x_j$. For row $i$, define
$
g_i(x)=\max_{j\in\Jset}\left\{\max\left\{\ph(a^+_{ij},x_j),\ph(a^-_{ij},1-x_j)\right\}\right\}.
$
The requirement $g_i(x)=b_i$ has two simultaneous consequences: every variable contribution is at most $b_i$, and at least one positive or negative branch attains $b_i$. This ``one branch attains the row level'' property is the clause-witness structure used later by the algorithm.

Throughout this paper, $b_i$ is a \emph{target grade}, not a lower satisfaction threshold. The equality $g_i(x)=b_i$ is intentional because the model inherits bipolar fuzzy relational-equation semantics: contributions above $b_i$ are inadmissible, while at least one contribution must attain $b_i$ exactly. Replacing the equality by $g_i(x)\geq b_i$ would define a different threshold-satisfaction model, with a different feasible-set structure, and is not considered here.

\begin{definition}[Bipolar fuzzy minimum-weight satisfiability]\label{def_bfmwsat}
Given $A^+$, $A^-$, $b$, a continuous t-norm $\ph$, and a nonnegative cost vector $c\in\R_+^n$, the \BFMWSAT\ problem is
\begin{equation}\label{eq_bfmwsat}
\begin{aligned}
\min_{x\in[0,1]^n}~ & c^Tx\\
\text{s.t.}~ & g_i(x)=b_i,~ i\in\Iset.
\end{aligned}
\end{equation}
\end{definition}

Definition \ref{def_bfmwsat} is a satisfiability-oriented specialization of continuous-BFRE-constrained optimization \cite{ghodousian2025core}. The present paper uses only the structural consequences needed for minimum-weight clause reasoning and develops the search procedure directly in those terms.

\begin{remark}\label{rem_semantics}
The word \emph{satisfiability} is used here in a relational sense. The row map $g_i$ is a max--t-norm bipolar relational composition, not the recursive truth evaluation of an arbitrary formula in Lukasiewicz, G\"odel, product, or another fuzzy propositional logic. Accordingly, \BFMWSAT\ should be compared with fuzzy MaxSAT and fuzzy-SAT models such as \cite{halaby2016,lapenta2026,castro2026} as a different graded semantics rather than as a replacement for them.
\end{remark}

% ==================================== Structural properties ====================================
\section{Structural properties and clause witnesses}\label{sec_structure}
\subsection{Classical minimum-weight SAT as a crisp specialization}
Li and Jin \cite{lijin2016} established the fundamental connection between Boolean satisfiability and bipolar max--min equations. The first result below records the corresponding embedding for the present optimization model and observes that the crisp construction does not depend on which t-norm is used.

For a CNF formula $\mathcal{C}$, construct crisp relational matrices by setting
\[
a^+_{ij}=\begin{cases}1,&j\in P_i,\\0,&j\notin P_i,\end{cases}
~
a^-_{ij}=\begin{cases}1,&j\in N_i,\\0,&j\notin N_i,\end{cases}
~ b_i=1.
\]
Every t-norm satisfies the boundary identities $\ph(1,z)=z$ and $\ph(0,z)=0$.

\begin{theorem}[t-norm-invariant crisp embedding]\label{thm_crisp}
For the crisp data above, the clause formula $\mathcal{C}$ and the bipolar relational system have exactly the same Boolean feasible vectors. This equivalence uses only the boundary axioms of a t-norm and therefore holds for every t-norm, not only for continuous ones. In particular, classical minimum-weight SAT is a crisp special case of \BFMWSAT\ for every continuous t-norm admitted by Definition \ref{def_bfmwsat}.
\end{theorem}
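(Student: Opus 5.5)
The plan is to fix an arbitrary Boolean vector $x\in\B^n$ and compute $g_i(x)$ row by row from the boundary identities $\ph(1,z)=z$ and $\ph(0,z)=0$. Commutativity gives $\ph(z,1)=z$ and $\ph(z,0)=0$ as well. Each term in the definition of $g_i$ has its first argument in $\B$, because the crisp matrices have $0/1$ entries. So every term reduces to a product of indicators:
\[
\ph(a^+_{ij},x_j)=a^+_{ij}x_j,\qquad \ph(a^-_{ij},1-x_j)=a^-_{ij}(1-x_j).
\]
Here $1-x_j\in\B$ because $x_j\in\B$. For Boolean inputs these identities can also be read directly off the boundary axioms. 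No continuity, monotonicity or associativity is needed at this point.

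From these identities,
\[
g_i(x)=\max\Bigl\{\max_{j\in P_i}x_j,\;\max_{j\in N_i}(1-x_j)\Bigr\},
\]
with the paper's convention that a maximum over an empty set is $0$. The quantity $g_i(x)$ lies in $\B$. It equals $1$ exactly when some $j\in P_i$ has $x_j=1$ or some $j\in N_i$ has $x_j=0$, which is precisely the condition that the assignment $y=x$ satisfies $C_i$. Since $b_i=1$, the equation $g_i(x)=b_i$ holds if and only if $y=x$ satisfies $C_i$.

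For an empty clause, both $P_i$ and $N_i$ are empty, so $g_i\equiv 0\neq 1$. This matches the fact that an empty clause is unsatisfiable. Taking the conjunction over all $i$ shows that the Boolean feasible vectors of the relational system coincide with the models of $\mathcal{C}$. The argument uses only the boundary axioms, so it holds for every t-norm.

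For the last sentence of the theorem, I take the statement's wording literally: the objective $c^Tx$ coincides with $\sum_j c_jy_j$ on $\B^n$. Therefore problem \eqref{eq_bfmwsat} restricted to $\B^n$ is exactly \eqref{eq_mwsat}, for any continuous $\ph$. This is the sense of ``special case'' intended here. Existence of a Boolean optimum over the full continuous feasible set is announced as a separate result, so I will not prove it here.

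The main obstacle is minor. It is to make sure the reduction $\ph(a,z)\in\{0,z\}$ is justified in both argument orders, which commutativity handles. One should also not silently use properties beyond the boundary axioms, so that the claim about ``every t-norm'' is honest.
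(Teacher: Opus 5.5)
Your proposal is correct and follows the paper's proof. For Boolean $x$, the boundary identities reduce row $i$ to $\max\{\max_{j\in P_i}x_j,\max_{j\in N_i}(1-x_j)\}=1$, which is exactly the truth condition of $C_i$, and the objective coincides with \eqref{eq_mwsat} on $\B^n$; your empty-clause check is a harmless addition. The only small imprecision is your aside that no monotonicity is used: under the standard axioms, $\ph(0,0)=0$ is derived from $\ph(0,1)=0$ via monotonicity, but since the paper itself takes $\ph(0,z)=0$ as a boundary identity, this does not affect the argument.
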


\begin{proof}
For a Boolean vector $y$, row $i$ reduces to
\[
\max\left\{\max_{j\in P_i}y_j,\;\max_{j\in N_i}(1-y_j)\right\}=1.
\]
The equality holds if and only if some positive literal has $y_j=1$ or some negative literal has $y_j=0$, which is precisely the truth condition for $C_i$. Applying the argument to every row proves the equivalence. The objective $c^Ty$ is exactly the objective in \eqref{eq_mwsat}.
\end{proof}

The continuous domain may contain additional fractional feasible points in the crisp case, but such points cannot improve the minimum-weight objective.

\begin{theorem}[exactness of the continuous domain in the crisp case]\label{thm_binary}
Suppose that $A^+$ and $A^-$ are the crisp incidence matrices of a CNF formula, $b=\mathbf{1}$, and $c\geq0$. For every feasible $x\in[0,1]^n$ of Problem \eqref{eq_bfmwsat}, there exists a feasible Boolean vector $y\in\B^n$ such that $c^Ty\leq c^Tx$. Consequently,
\[
\min\{c^Tx:x\in[0,1]^n,\;x\text{ satisfies the crisp BFRE rows}\}
=
\min\{c^Ty:y\in\B^n,\;y\models\mathcal{C}\},
\]
and a Boolean global optimum exists whenever the crisp instance is feasible.
\end{theorem}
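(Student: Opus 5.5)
Given a feasible $x\in[0,1]^n$ of Problem \eqref{eq_bfmwsat} with crisp data, I would round it \emph{downward}: set $y_j=1$ if $x_j=1$ and $y_j=0$ otherwise. Since $c\geq0$ and $y_j\leq x_j$ for every $j$, we get $c^Ty\leq c^Tx$ immediately. The work is to show that $y$ is feasible, that is, each row still attains the value $1$.

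Fix a row $i$. Using the boundary identities $\ph(1,z)=z$ and $\ph(0,z)=0$, the row reduces to
\[
g_i(x)=\max\Bigl\{\max_{j\in P_i}x_j,\;\max_{j\in N_i}(1-x_j)\Bigr\},
\]
with empty maxima equal to zero. The equality $g_i(x)=1$ means some term equals $1$; this uses only that the maximum is over a finite set. There are two cases:
\begin{itemize}
\item If $x_j=1$ for some $j\in P_i$, then $y_j=1$ and the positive literal is true.
\item If $1-x_j=1$ for some $j\in N_i$, then $x_j=0$, hence $y_j=0$ and the negative literal $\neg y_j$ is true.
\end{itemize}
The second case is exactly where downward rounding matters. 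Any value $x_j<1$ is sent to $0$, and a witness $x_j=0$ stays $0$. Also, $j$ cannot be in both $P_i$ and $N_i$ with conflicting needs, because we only use the one witness term that equals $1$. In either case $C_i$ is satisfied, so by Theorem \ref{thm_crisp} the Boolean vector $y$ satisfies the crisp BFRE rows.

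For the equality of optimal values, note that Boolean feasible vectors are feasible points of the continuous problem, so the continuous minimum is at most the Boolean minimum. Conversely, every continuous feasible point is dominated in cost by a Boolean feasible one, so the continuous infimum is at least the Boolean minimum. The Boolean minimum is attained because $\B^n$ is finite. Hence the continuous infimum is attained by that Boolean vector, and it is a global optimum whenever the instance is feasible.

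The only delicate step is the choice of rounding direction. Rounding up would break negative witnesses, and rounding to nearest would break positive witnesses with $x_j\in[1/2,1)$. I expect this to be the main point to check carefully. It works here because $b_i=1$ forces every witness to sit exactly at $0$ or $1$.
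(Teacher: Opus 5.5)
Your proof is correct and follows the paper's argument: the paper likewise sends every fractional component to $0$ (one at a time, where you do it all at once), using that with $b=\mathbf{1}$ only endpoint-valued literals can witness a row, so feasibility is preserved and $c\geq 0$ gives $c^Ty\leq c^Tx$. Your closing remark is inaccurate, though: every witness sits exactly at $0$ or $1$, so ceiling or nearest rounding also fix the witnesses and preserve feasibility, and rounding down is needed only for the cost inequality.
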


\begin{proof}
Let $x$ be feasible and consider a component with $0<x_j<1$. In a crisp row, the two possible contributions of variable $j$ are $x_j$ and $1-x_j$, both strictly smaller than one. Hence this fractional variable cannot be responsible for attaining any row right-hand side. Every row is therefore already witnessed by an endpoint-valued literal not involving that fractional component. Set $x_j$ to zero. Existing witnesses remain unchanged, positive contributions of $j$ can only decrease, and negative contributions of $j$ can only increase to at most the row level one; feasibility is preserved. Since $c_j\geq0$, the objective does not increase. Repeating this operation for every fractional component produces a Boolean feasible vector $y$ with $c^Ty\leq c^Tx$.

Thus every continuous feasible point is dominated in objective value by some Boolean feasible point. Conversely, Theorem \ref{thm_crisp} shows that every Boolean satisfying assignment is feasible for the continuous-domain model. The two optimal values are therefore equal. Because the Boolean feasible set is finite and nonempty whenever the continuous model is feasible, a Boolean optimum exists.
\end{proof}

The endpoint-witness idea is already present in the SAT reduction of Li and Jin for bipolar max--min feasibility \cite{lijin2016}. Theorem \ref{thm_binary} uses the same structural observation to establish objective-value exactness for the minimum-weight model and, by Theorem \ref{thm_crisp}, for the crisp embedding under any continuous t-norm. This exactness does not alter worst-case complexity: the crisp feasibility specialization contains the NP-complete consistency problem studied in \cite{lijin2016}.

\subsection{Admissible domains and effective witnesses}
For a fixed pair $(i,j)$, write
\[
q_{ij}(z)=\max\{\ph(a^+_{ij},z),\ph(a^-_{ij},1-z)\},~ z\in[0,1].
\]
Define the row-wise admissibility and activation sets by
\[
I_{ij}=\{z\in[0,1]:q_{ij}(z)\leq b_i\},~
S_{ij}=\{z\in[0,1]:q_{ij}(z)=b_i\}.
\]
The globally admissible domain of variable $j$ and its effective activation set for row $i$ are
\begin{equation}\label{eq_domain}
I_j=\bigcap_{i\in\Iset}I_{ij},
~
S'_{ij}=S_{ij}\cap I_j.
\end{equation}
Thus $I_j$ contains exactly the values of $x_j$ that do not make any row exceed its required grade, while $S'_{ij}$ contains the globally admissible values at which variable $j$ can attain row $i$.

\begin{proposition}[finite interval structure of the scalar sets]\label{prop_intervalstructure}
For a continuous t-norm, each nonempty $I_{ij}$ and $I_j$ is a closed interval. Each $S_{ij}$ and $S'_{ij}$ is either empty or the union of at most two closed intervals. Consequently, all scalar sets used by the clause-witness search are compact whenever nonempty, and their minima exist.
\end{proposition}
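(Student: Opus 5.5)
The plan is to reduce everything to the monotonicity and continuity of the two branch functions. For fixed $(i,j)$ set $u(z)=\ph(a^+_{ij},z)$ and $v(z)=\ph(a^-_{ij},1-z)$ on $[0,1]$. Since $\ph$ is a continuous t-norm, $u$ is continuous and non-decreasing in $z$, and $v$ is continuous and non-increasing in $z$. Hence $q_{ij}=\max\{u,v\}$ is continuous. We then have
\[
I_{ij}=\{z:u(z)\le b_i\}\cap\{z:v(z)\le b_i\}.
\]
The first set is a sublevel set of a continuous non-decreasing function, so it is either empty or a closed interval of the form $[0,\alpha]$. The second set is similarly either empty or of the form $[\beta,1]$. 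Their intersection is therefore empty or the closed interval $[\beta,\alpha]$. The set $I_j$ is a finite intersection of closed intervals of $[0,1]$, so it is empty or a closed interval. This settles the first claim.

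For $S_{ij}$, I will work inside $I_{ij}=[\beta,\alpha]$, assuming it is nonempty (otherwise $S_{ij}=\emptyset$). On this interval $q_{ij}\le b_i$, so
\[
S_{ij}=\{z\in I_{ij}:u(z)=b_i\}\cup\{z\in I_{ij}:v(z)=b_i\}.
\]
Take the first piece. Its points are the $z\in[\beta,\alpha]$ with $u(z)\ge b_i$, because $u\le b_i$ already holds there. This is the superlevel set of a continuous non-decreasing function, so it is empty or of the form $[\gamma,1]$. Intersecting with $[\beta,\alpha]$ leaves a closed interval or the empty set. By the symmetric argument, the second piece is empty or $[\beta,\delta]\cap[\beta,\alpha]$, again a closed interval. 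So $S_{ij}$ is a union of at most two closed intervals. Then
\[
S'_{ij}=S_{ij}\cap I_j
\]
intersects each of these pieces with one closed interval. Each piece stays a closed interval or becomes empty, and there are still at most two of them.

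Compactness follows because finite unions of closed intervals in $[0,1]$ are closed and bounded. Every nonempty compact subset of $\mathbb{R}$ attains its minimum, which gives the last claim.

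The only step needing care is the claim that a sublevel or superlevel set of a continuous monotone function on $[0,1]$ is a closed interval anchored at an endpoint. A set $\{z:u(z)\le b\}$ with $u$ non-decreasing is downward closed, so it is an interval containing $0$ when nonempty. Continuity makes it closed, since it is the preimage of a closed set. The superlevel case is the same argument with the order reversed. I do not expect a real obstacle. The main subtlety is degenerate cases: $a^+_{ij}=0$ makes $u\equiv0$, and $b_i=0$ makes the level sets coincide with sublevel sets. The argument above handles both uniformly, because it never assumes strict monotonicity.
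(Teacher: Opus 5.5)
Your proof is correct and follows essentially the same route as the paper. Both arguments use continuity and monotonicity of $z\mapsto\ph(a^+_{ij},z)$ and $z\mapsto\ph(a^-_{ij},1-z)$ to write $I_{ij}$ as the intersection of a closed interval starting at $0$ with a closed interval ending at $1$. Both then write $S_{ij}$ as $I_{ij}$ intersected with the union of the two branch level sets, and intersect with $I_j$. Your one variation is to obtain each level-set piece as a superlevel set restricted to $I_{ij}$; this simply spells out the paper's remark that a nonempty level set of a continuous monotone map is a closed interval.
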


\begin{proof}
For fixed $a\in[0,1]$, the map $z\mapsto\ph(a,z)$ is continuous and non-decreasing. Hence the sublevel set $\{z:\ph(a,z)\leq b\}$ is a closed interval beginning at zero, while a nonempty level set $\{z:\ph(a,z)=b\}$ is a closed interval. The map $z\mapsto\ph(a,1-z)$ is continuous and non-increasing, so its sublevel set is a closed interval ending at one and its nonempty level set is again a closed interval. Therefore $I_{ij}$, the intersection of the positive and negative sublevel sets, is a closed interval. Moreover,
\[
S_{ij}=I_{ij}\cap\bigl(\{z:\ph(a^+_{ij},z)=b_i\}\cup\{z:\ph(a^-_{ij},1-z)=b_i\}\bigr),
\]
so it is the union of at most two closed intervals. Intersecting the $I_{ij}$ gives the interval $I_j$, and intersecting $S_{ij}$ with $I_j$ cannot create additional components.
\end{proof}

Proposition \ref{prop_intervalstructure} is the amount of scalar-set structure needed by the algorithm. Closed-form endpoint formulae for common continuous t-norms and the corresponding general BFRE derivations are given in \cite{ghodousian2025core}. In an implementation, a selected t-norm must therefore be accompanied by exact or certified routines for forming these interval sets, testing intersections, and extracting minima. The combinatorial search developed below begins after these scalar operations are available; it does not assume that an arbitrary continuous function is supplied as an unevaluable black box.

\begin{definition}[effective clause-witness set]\label{def_witnessset}
For each row $i$, define
\[
J_i=\{j\in\Jset:S'_{ij}\neq\varnothing\}.
\]
A variable $j\in J_i$ is called an \emph{effective witness} for clause requirement $i$.
\end{definition}

\begin{proposition}[feasibility by clause witnesses]\label{prop_feas}
A vector $x\in[0,1]^n$ is feasible for Problem \eqref{eq_bfmwsat} if and only if
\begin{enumerate}
    \item $x_j\in I_j$ for every $j\in\Jset$; and
    \item for every $i\in\Iset$, there exists at least one $j\in J_i$ such that $x_j\in S'_{ij}$.
\end{enumerate}
\end{proposition}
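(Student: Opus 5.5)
The plan is to prove both implications directly from the definitions of $g_i$, $I_{ij}$, $S_{ij}$, $I_j$ and $S'_{ij}$. Everything rests on one pointwise reformulation of the row equality. Since $g_i(x)=\max_{j\in\Jset}q_{ij}(x_j)$, the equality $g_i(x)=b_i$ holds if and only if two conditions hold:
\begin{enumerate}
    \item $q_{ij}(x_j)\leq b_i$ for all $j$;
    \item $q_{ij}(x_j)=b_i$ for at least one $j$.
\end{enumerate}
This is because a maximum over the finite set $\Jset$ equals $b_i$ exactly when no term exceeds $b_i$ and some term attains it. Because $\Jset$ is finite, the maximum is attained, so no continuity argument is needed here.

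\textbf{Necessity.} Let $x$ be feasible. For each $i$ and $j$, the first condition gives $x_j\in I_{ij}$. Intersecting over $i\in\Iset$ yields $x_j\in I_j$, which is condition 1. For each row $i$, the second condition supplies an index $j$ with $q_{ij}(x_j)=b_i$, that is, $x_j\in S_{ij}$. Combined with $x_j\in I_j$, this gives $x_j\in S'_{ij}$. In particular $S'_{ij}\neq\varnothing$, so $j\in J_i$ by Definition \ref{def_witnessset}, which is condition 2.

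\textbf{Sufficiency.} Suppose conditions 1 and 2 hold. Condition 1 gives $x_j\in I_j\subseteq I_{ij}$ for every $i$, so $q_{ij}(x_j)\leq b_i$ for all $i$ and $j$, and hence $g_i(x)\leq b_i$. Condition 2 provides, for each $i$, some $j\in J_i$ with $x_j\in S'_{ij}\subseteq S_{ij}$, so $q_{ij}(x_j)=b_i$ and therefore $g_i(x)\geq b_i$. Together these give $g_i(x)=b_i$ for every row, so $x$ is feasible.

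There is no real obstacle in this argument. The only point needing care is the bookkeeping in the necessity direction: the witness must lie in the \emph{effective} set $S'_{ij}$ rather than merely in $S_{ij}$. This is exactly why condition 1 is established first and then used to upgrade $x_j\in S_{ij}$ to $x_j\in S'_{ij}$, which in turn certifies $j\in J_i$. Proposition \ref{prop_intervalstructure} is not needed for this equivalence; it matters only later, for computability of the sets involved.
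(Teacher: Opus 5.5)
Your proof is correct and follows essentially the same route as the paper. Both split $g_i(x)=b_i$ into ``no contribution exceeds $b_i$'' and ``some contribution attains $b_i$'', then read off membership in $I_j$ and $S'_{ij}$; you are slightly more explicit than the paper in noting that $x_j\in S'_{ij}$ forces $S'_{ij}\neq\varnothing$ and hence $j\in J_i$.
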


\begin{proof}
If $x_j\in I_j$ for all $j$, then $q_{ij}(x_j)\leq b_i$ for every pair $(i,j)$. If, in addition, $x_j\in S'_{ij}$ for some $j$, then $q_{ij}(x_j)=b_i$, so the maximum defining row $i$ equals $b_i$. Conversely, if all relational rows hold, every contribution is at most the corresponding $b_i$, which yields $x_j\in I_{ij}$ for every $i$ and hence $x_j\in I_j$. Each row maximum also has at least one attaining index $j$, and that value lies in $S_{ij}\cap I_j=S'_{ij}$.
\end{proof}

Two immediate infeasibility certificates follow: $I_j=\varnothing$ for some variable, or $J_i=\varnothing$ for some row. They are checked before any combinatorial search.

\subsection{Compatible witness assignments and region-wise optima}
A complete witness assignment selects one effective variable for every row. Let $e:\Iset\rightarrow\Jset$ satisfy $e(i)\in J_i$, and let
\[
\Iset_j(e)=\{i\in\Iset:e(i)=j\}.
\]
The values compatible with all rows assigned to variable $j$ are
\begin{equation}\label{eq_de}
D_j(e)=
\begin{cases}
I_j, & \Iset_j(e)=\varnothing,\\[1mm]
\displaystyle\bigcap_{i\in\Iset_j(e)}S'_{ij}, & \Iset_j(e)\neq\varnothing.
\end{cases}
\end{equation}
We call $e$ \emph{compatible} if $D_j(e)\neq\varnothing$ for every $j$. A compatible assignment defines $D(e)=D_1(e)\times\cdots\times D_n(e)$. Although each $S'_{ij}$ has at most two interval components, repeated intersections are most conveniently stored as finite unions of closed intervals; compactness is preserved and $\min D_j(e)$ is always defined for a compatible assignment.

\begin{theorem}[witness representation and region optimum]\label{thm_witnessregions}
The following statements hold for \BFMWSAT.
\begin{enumerate}
    \item The complete feasible set is the union of $D(e)$ over all compatible complete witness assignments $e$.
    \item For any compatible $e$, the vector $x^*(e)$ defined by $x^*_j(e)=\min D_j(e)$ minimizes $c^Tx$ over $D(e)$.
    \item Hence a global optimum is obtained by comparing the region-wise candidates $x^*(e)$ over compatible $e$.
\end{enumerate}
\end{theorem}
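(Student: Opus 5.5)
The plan is to derive all three parts from Proposition \ref{prop_feas}, Proposition \ref{prop_intervalstructure}, and the nonnegativity of $c$. For part 1 we show the two inclusions separately.

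\emph{Each region is feasible.} Fix a compatible $e$ and take $x\in D(e)$. We check the two conditions of Proposition \ref{prop_feas}.
\begin{itemize}
\item If $\Iset_j(e)=\varnothing$, then $x_j\in I_j$ directly.
\item Otherwise $x_j\in\bigcap_{i\in\Iset_j(e)}S'_{ij}\subseteq I_j$, since $S'_{ij}=S_{ij}\cap I_j$.
\end{itemize}
So condition 1 holds. For each row $i$, put $j=e(i)$. Then $i\in\Iset_j(e)$, so $x_j\in D_j(e)\subseteq S'_{ij}$, and condition 2 holds with witness $e(i)\in J_i$. Hence $D(e)$ lies in the feasible set.

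\emph{Every feasible point lies in some region.} Let $x$ be feasible. By Proposition \ref{prop_feas}, each row $i$ admits at least one $j\in J_i$ with $x_j\in S'_{ij}$. Choose one such index and call it $e(i)$ (for instance, the smallest), which gives a complete witness assignment. For each $j$ with $\Iset_j(e)\neq\varnothing$, the choice gives $x_j\in S'_{ij}$ for every $i\in\Iset_j(e)$, so $x_j\in D_j(e)$. For each $j$ with $\Iset_j(e)=\varnothing$, condition 1 gives $x_j\in I_j=D_j(e)$. Thus every $D_j(e)$ contains $x_j$ and is nonempty, so $e$ is compatible and $x\in D(e)$.

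For part 2, note that $D(e)$ is a Cartesian product. Each factor is nonempty and compact: it is either the closed interval $I_j$ or a finite intersection of sets that are finite unions of closed intervals, by Proposition \ref{prop_intervalstructure}. So $\min D_j(e)$ exists, and $x^*(e)\in D(e)$. For any $x\in D(e)$ we have $x_j\ge x^*_j(e)$ componentwise. Since $c\ge0$, this gives $c^Tx\ge c^Tx^*(e)$, so $x^*(e)$ minimizes $c^Tx$ over $D(e)$.

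For part 3, there are at most $\prod_i|J_i|$ complete assignments, so only finitely many compatible $e$ exist. If the feasible set is nonempty, part 1 says at least one compatible $e$ exists. The minimum of $c^Tx$ over a finite union equals the minimum of the region-wise minima. By part 2, the best candidate $x^*(e)$ is therefore a global optimum, and the set of candidates is empty exactly when the problem is infeasible.

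The only delicate point is in part 1: showing that the assignment built from a feasible point is compatible, in particular handling the unassigned variables through $I_j$. The rest is routine.
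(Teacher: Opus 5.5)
Your proof is correct and follows the paper's argument. Both inclusions of part 1 come from Proposition \ref{prop_feas}, part 2 from the componentwise minima together with $c\geq0$, and part 3 from the union representation; you additionally make explicit that $x^*(e)\in D(e)$ by compactness and that only finitely many compatible $e$ exist, which the paper leaves implicit.
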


\begin{proof}
If $x\in D(e)$, then $x_j\in I_j$ for every variable and $x_{e(i)}\in S'_{i,e(i)}$ for every row; Proposition \ref{prop_feas} gives feasibility. Conversely, if $x$ is feasible, Proposition \ref{prop_feas} allows one witnessing variable $e(i)$ to be selected for every row. For each $j$, the value $x_j$ then belongs to all activation sets assigned to that variable, so $D_j(e)$ is nonempty and $x\in D(e)$. This proves the first statement.

For the second statement, every $x\in D(e)$ satisfies $x_j\geq\min D_j(e)=x^*_j(e)$. Since $c_j\geq0$, summing $c_jx_j\geq c_jx^*_j(e)$ over all coordinates gives $c^Tx\geq c^Tx^*(e)$. The final statement follows immediately from the union representation.
\end{proof}

The number of candidate witness assignments before compatibility is tested is at most $\prod_{i\in\Iset}|J_i|$, which can be exponential. The next section searches this same exact representation without constructing every complete assignment in advance.

% ==================================== Algorithmic framework ====================================
\section{Preprocessing and exact clause-witness search}\label{sec_algorithmic}
\subsection{Safe preprocessing}
The globally admissible domains $I_j$ in \eqref{eq_domain} are computed from the original instance and are retained throughout the search. When a row is removed during preprocessing, only its need for an explicit witness is removed; the admissibility restrictions already incorporated in the $I_j$ are not enlarged. This convention makes the search state simple and avoids repeatedly rebuilding the scalar BFRE system.

The following reductions are exact and have direct SAT-oriented interpretations.

\paragraph{Infeasibility.}
If $I_j=\varnothing$ for some variable or $J_i=\varnothing$ for some row, Proposition \ref{prop_feas} proves infeasibility.

\paragraph{Fixed variable.}
If $I_j=\{k\}$, then every feasible solution has $x_j=k$. Any row $i$ satisfying $k\in S'_{ij}$ is automatically witnessed and can be removed from the active-row set. The variable itself is kept with the singleton domain so that its objective contribution remains explicit.

\paragraph{Singleton witness.}
If $J_i=\{j\}$, every feasible solution must satisfy $x_j\in S'_{ij}$. In particular, if $S'_{ij}=\{k\}$, then $x_j=k$ is forced. The more general non-singleton case is handled by the propagation step during search.

\paragraph{Witness-dominance redundancy.}
If two active rows $r$ and $i$ satisfy $S'_{rj}\subseteq S'_{ij}$ for every $j\in\Jset$, then row $i$ is redundant as long as row $r$ is retained: any witness that attains row $r$ also attains row $i$. Likewise, if $S'_{ij}=I_j$ for some $j$, row $i$ is automatically witnessed by every globally admissible value of variable $j$ and can be removed. These are the clause-witness forms of two general BFRE reduction rules in \cite{ghodousian2025core}.

These reductions may be applied repeatedly. More aggressive equivalent transformations from \cite{ghodousian2025core} can also be used, but they are optional; the correctness proof of the search below relies only on exact scalar sets and on reductions whose equivalence has been certified.

\subsection{Partial witness assignments and propagation}
Let $\mathcal{A}\subseteq\Iset$ denote the rows that remain active after preprocessing. A search node is a partial witness assignment $p$ defined on $R(p)\subseteq\mathcal{A}$. Its current domain for variable $j$ is
\begin{equation}\label{eq_partialdomain}
D_j(p)=I_j\cap\bigcap_{\substack{i\in R(p)\\p(i)=j}}S'_{ij},
\end{equation}
where the empty intersection leaves $D_j(p)=I_j$. For an unassigned active row $i\in\mathcal{A}\setminus R(p)$, the currently compatible witnesses are
\begin{equation}\label{eq_currentwitness}
J_i(p)=\{j\in J_i:D_j(p)\cap S'_{ij}\neq\varnothing\}.
\end{equation}

\begin{lemma}[partial-node invariants]\label{lem_node}
Let $e$ be a compatible complete witness assignment extending a partial assignment $p$. Then $D_j(e)\subseteq D_j(p)$ for every $j$. Consequently:
\begin{enumerate}
    \item if some $D_j(p)$ is empty, $p$ has no compatible completion;
    \item if $J_i(p)=\varnothing$ for an unassigned active row $i$, $p$ has no compatible completion; and
    \item if $J_i(p)=\{j\}$, every compatible completion of $p$ must assign row $i$ to $j$.
\end{enumerate}
\end{lemma}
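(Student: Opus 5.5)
The plan is to prove the inclusion $D_j(e)\subseteq D_j(p)$ directly from the definitions \eqref{eq_de} and \eqref{eq_partialdomain}, and then read off the three consequences from it. Here ``$e$ extends $p$'' means that $e(i)=p(i)$ for every $i\in R(p)$. Fix $j$ and set $R_j(p)=\{i\in R(p):p(i)=j\}$. Because $e$ extends $p$, we have $R_j(p)\subseteq\Iset_j(e)$.

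For the inclusion I will split into two cases. If $\Iset_j(e)=\varnothing$, then $R_j(p)=\varnothing$ as well, so $D_j(e)=I_j=D_j(p)$. Otherwise $D_j(e)=\bigcap_{i\in\Iset_j(e)}S'_{ij}$. Each $S'_{ij}=S_{ij}\cap I_j$ is contained in $I_j$, so $D_j(e)\subseteq I_j$. Intersecting over the larger index set $\Iset_j(e)\supseteq R_j(p)$ also gives $D_j(e)\subseteq\bigcap_{i\in R_j(p)}S'_{ij}$. Combining the two inclusions yields $D_j(e)\subseteq D_j(p)$. The one point that needs care is that \eqref{eq_de} omits $I_j$ in the nonempty case, and the definition $S'_{ij}\subseteq I_j$ restores it. That is the only subtlety in the argument.

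The three consequences then follow.
\begin{enumerate}
\item A compatible $e$ has every $D_j(e)\neq\varnothing$. If $D_j(p)=\varnothing$, the inclusion forces $D_j(e)=\varnothing$, a contradiction.
\item Suppose row $i$ is an unassigned active row and $e$ is a compatible completion. Let $j=e(i)\in J_i$. Then $i\in\Iset_j(e)$, so $D_j(e)\subseteq S'_{ij}$. Also $D_j(e)\subseteq D_j(p)$ and $D_j(e)\neq\varnothing$, so $D_j(p)\cap S'_{ij}\neq\varnothing$, that is, $j\in J_i(p)$. Hence $e(i)\in J_i(p)$ for every compatible completion. If $J_i(p)$ is empty, no compatible completion exists.
\item If $J_i(p)=\{j\}$, the membership $e(i)\in J_i(p)$ just shown forces $e(i)=j$.
\end{enumerate}

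I expect no real obstacle. The main thing to state explicitly is the membership $e(i)\in J_i(p)$, since it drives both the second and third consequences.
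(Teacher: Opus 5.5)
Your proof is correct and follows essentially the paper's route. You derive $D_j(e)\subseteq D_j(p)$ from the fact that an extension only adds activation-set intersections, then use that $D_j(e)\subseteq D_j(p)\cap S'_{ij}$ is nonempty to get $e(i)\in J_i(p)$, which yields the second and third consequences; your version is slightly more careful than the paper's in noting that $S'_{ij}\subseteq I_j$ restores the factor $I_j$ omitted from \eqref{eq_de} in the nonempty case.
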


\begin{proof}
An extension $e$ can only add activation-set intersections to those already imposed by $p$, which gives $D_j(e)\subseteq D_j(p)$. The first statement follows immediately. For the second, if a completion assigned row $i$ to some $j$, then $D_j(e)\subseteq D_j(p)\cap S'_{ij}$ would be nonempty, contradicting $J_i(p)=\varnothing$. The same argument shows that when only one $j$ remains in $J_i(p)$, every completion must select that witness.
\end{proof}

Lemma \ref{lem_node} justifies forced-witness propagation. Whenever an unassigned row has $|J_i(p)|=1$, its unique witness is assigned, the corresponding domain is intersected with $S'_{ij}$, all affected $J_r(p)$ are recomputed, and the process is repeated until no singleton row remains or infeasibility is detected.

\subsection{Objective bounds, node closure, and branching}
For a nonempty node $p$, define its coordinatewise lower point by $\underline{x}_j(p)=\min D_j(p)$. The basic domain lower bound is
\[
\operatorname{LB}_0(p)=\sum_{j=1}^{n}c_j\underline{x}_j(p).
\]
Every descendant can only intersect the current domains further, so $\operatorname{LB}_0$ is non-decreasing along every root-to-leaf path.

For an unassigned active row $i$, define its least additional witness cost by
\[
\delta_i(p)=\min_{j\in J_i(p)}c_j\left[\min\bigl(D_j(p)\cap S'_{ij}\bigr)-\min D_j(p)\right].
\]
The bracketed term is nonnegative. Hence $\operatorname{LB}_0(p)+\max_i\delta_i(p)$ is a valid single-row strengthening, because every completion must eventually witness each unresolved row. It does not, however, exploit the fact that some rows cannot share a witnessing variable.

A set $K\subseteq\mathcal{A}\setminus R(p)$ is called a \emph{disjoint-witness packing} at node $p$ if
\[
J_r(p)\cap J_i(p)=\varnothing,~ r,i\in K,\ r\neq i.
\]
For such a packing, define
\begin{equation}\label{eq_lb}
\operatorname{LB}_{K}(p)=\operatorname{LB}_0(p)+\sum_{i\in K}\delta_i(p).
\end{equation}

\begin{proposition}[disjoint-witness packing bound]\label{prop_packing}
For every search node $p$ with nonempty current domains and every disjoint-witness packing $K$, $\operatorname{LB}_{K}(p)$ is a lower bound on the objective value of every feasible completion of $p$.
\end{proposition}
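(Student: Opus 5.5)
Let $x$ be any feasible completion of $p$. By Theorem \ref{thm_witnessregions}, $x\in D(e)$ for some compatible complete witness assignment $e$ extending $p$. By Lemma \ref{lem_node}, $x_j\in D_j(e)\subseteq D_j(p)$ for every $j$. The plan is to decompose the objective as
\[
c^Tx=\sum_{j}c_j\underline{x}_j(p)+\sum_j c_j\bigl(x_j-\underline{x}_j(p)\bigr),
\]
where every term of the second sum is nonnegative because $c_j\geq0$ and $x_j\geq\min D_j(p)$. The first sum is $\operatorname{LB}_0(p)$. It then suffices to show that the second sum is at least $\sum_{i\in K}\delta_i(p)$.

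For each $i\in K$, row $i$ is unassigned in $p$ but assigned in $e$; let $j_i=e(i)$. Then $x_{j_i}\in D_{j_i}(e)\subseteq D_{j_i}(p)\cap S'_{ij_i}$, so this intersection is nonempty and $j_i\in J_i(p)$. We also get $x_{j_i}\geq\min\bigl(D_{j_i}(p)\cap S'_{ij_i}\bigr)$, the minimum existing by compactness (Proposition \ref{prop_intervalstructure}). Hence
\[
c_{j_i}\bigl(x_{j_i}-\underline{x}_{j_i}(p)\bigr)\geq c_{j_i}\Bigl[\min\bigl(D_{j_i}(p)\cap S'_{ij_i}\bigr)-\min D_{j_i}(p)\Bigr]\geq\delta_i(p),
\]
where the last inequality holds because $\delta_i(p)$ is the minimum over $J_i(p)\ni j_i$.

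The key step, and the only place the packing hypothesis enters, is that the indices $j_i$, $i\in K$, are pairwise distinct. This follows because $j_i\in J_i(p)$ and the sets $J_i(p)$, $i\in K$, are pairwise disjoint. Consequently each per-row increment is charged to a different coordinate, with no double counting, and
\[
\sum_j c_j\bigl(x_j-\underline{x}_j(p)\bigr)\geq\sum_{i\in K}c_{j_i}\bigl(x_{j_i}-\underline{x}_{j_i}(p)\bigr)\geq\sum_{i\in K}\delta_i(p).
\]
Discarding the remaining coordinates is valid since their terms are nonnegative. Combining this with the decomposition gives $c^Tx\geq\operatorname{LB}_K(p)$.

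The main subtlety I expect is justifying $j_i\in J_i(p)$ from the extension property rather than from propagation, which is what the argument through $D_{j_i}(e)\subseteq D_{j_i}(p)$ supplies. A second point is that rows removed in preprocessing do not matter, since $K\subseteq\mathcal{A}\setminus R(p)$ and the domains $I_j$ are retained unchanged.
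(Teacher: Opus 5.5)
Your proof is correct and takes essentially the same route as the paper. You take the witnesses $j_i=e(i)$ of a compatible completion $e$, note that they lie in the pairwise disjoint sets $J_i(p)$ and are therefore distinct, and add the per-coordinate increments $c_{j_i}\bigl[\min(D_{j_i}(p)\cap S'_{ij_i})-\min D_{j_i}(p)\bigr]\geq\delta_i(p)$ to $\operatorname{LB}_0(p)$, using $c\geq0$ to discard the remaining coordinates. Your derivation of $j_i\in J_i(p)$ from $D_{j_i}(e)\subseteq D_{j_i}(p)\cap S'_{ij_i}$ spells out a step that the paper only asserts.
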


\begin{proof}
Consider any compatible complete witness assignment $e$ extending $p$. For each $i\in K$, the selected witness $j_i=e(i)$ belongs to $J_i(p)$. Since the witness sets of the rows in $K$ are pairwise disjoint, the variables $j_i$ are distinct. The completed value of $x_{j_i}$ belongs to $D_{j_i}(p)\cap S'_{i j_i}$, so relative to the basic lower point its objective contribution increases by at least
$c_{j_i}[\min(D_{j_i}(p)\cap S'_{i j_i})-\min D_{j_i}(p)]\geq\delta_i(p)$. These increases occur on distinct variables and can therefore be added without double counting. All remaining coordinate increases are nonnegative because $c\geq0$. Thus every feasible completion has objective value at least $\operatorname{LB}_{K}(p)$.
\end{proof}

The use of disjoint structures to strengthen branch-and-bound lower bounds has an important precedent in exact MaxSAT. Li, Many\`a, and Planes \cite{li2006disjoint} interpreted several MaxSAT lower-bound procedures as searches for disjoint inconsistent subformulas, whose contributions can be added without double counting. The present construction is different: it does not search for inconsistent Boolean subformulas. Instead, it packs unresolved BF-MWSAT rows whose \emph{current witness-variable sets} are disjoint and adds their graded minimum-cost activation increments. Thus, the common principle is additivity under disjointness, while the objects being packed and the cost quantities are specific to the bipolar fuzzy relational model.

The strongest bound of this form would choose a maximum-weight collection of pairwise-disjoint current witness sets. Solving that auxiliary packing problem exactly at every search node would introduce unnecessary combinatorial overhead. We therefore use a greedy packing $K_g(p)$: process the unresolved rows in non-increasing order of $\delta_i(p)$ and insert a row whenever its current witness set is disjoint from those already selected. The first selected row has maximum $\delta_i(p)$, and every additional selected row contributes a nonnegative increment. Consequently, the greedy packing bound
$\operatorname{LB}_g(p)=\operatorname{LB}_{K_g(p)}(p)$ is valid and is never weaker than the single-row bound.

The lower point also provides an exact node-closure test.
\begin{proposition}[lower-point closure]\label{prop_bound}
For every search node $p$ with nonempty current domains:
\begin{enumerate}
    \item $\operatorname{LB}_0(p)$ is a lower bound on every feasible completion, and if $q$ extends $p$, then $\operatorname{LB}_0(q)\geq\operatorname{LB}_0(p)$;
    \item if $\underline{x}(p)$ is feasible for the original \BFMWSAT\ instance, then it is an optimal feasible completion of node $p$, with objective value $\operatorname{LB}_0(p)$, and the node can be fathomed immediately.
\end{enumerate}
\end{proposition}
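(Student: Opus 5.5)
The plan is to argue directly from Lemma \ref{lem_node}, Theorem \ref{thm_witnessregions}, and the nonnegativity of $c$, handling the two statements in turn.

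\textbf{Statement 1.} I first fix the meaning of a feasible completion. A feasible completion of $p$ is a point $x\in D(e)$ for some compatible complete witness assignment $e$ extending $p$; by Theorem \ref{thm_witnessregions}, every feasible point lies in such a $D(e)$. Lemma \ref{lem_node} gives $D_j(e)\subseteq D_j(p)$, so $x_j\geq\min D_j(p)=\underline{x}_j(p)$ for every $j$. These minima exist by Proposition \ref{prop_intervalstructure}, since each $D_j(p)$ is a nonempty finite union of closed intervals. Multiplying by $c_j\geq0$ and summing gives $c^Tx\geq\operatorname{LB}_0(p)$.

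For monotonicity, let $q$ extend $p$. Definition \eqref{eq_partialdomain} shows that $D_j(q)$ is $D_j(p)$ intersected with additional sets $S'_{ij}$, so $D_j(q)\subseteq D_j(p)$. If $D_j(q)$ is nonempty, this yields $\min D_j(q)\geq\min D_j(p)$. Summing with the nonnegative weights $c_j$ gives $\operatorname{LB}_0(q)\geq\operatorname{LB}_0(p)$.

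\textbf{Statement 2.} Suppose $\underline{x}(p)$ is feasible for the original instance. The point to establish is that $\underline{x}(p)$ is genuinely a completion of node $p$, and not merely some feasible point. I will show that it lies in $D(e)$ for a compatible $e$ extending $p$.

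\begin{itemize}
\item \emph{Assigned rows.} For $i\in R(p)$ with $p(i)=j$, set $e(i)=j$. Since $\underline{x}_j(p)\in D_j(p)\subseteq S'_{ij}$, row $i$ is witnessed.
\item \emph{Unassigned rows.} For an active row $i\notin R(p)$, feasibility together with Proposition \ref{prop_feas} supplies some $j$ with $\underline{x}_j(p)\in S'_{ij}$. That $j$ lies in $J_i$, and also in $J_i(p)$ because $\underline{x}_j(p)\in D_j(p)\cap S'_{ij}$. Set $e(i)=j$.
\item \emph{Rows removed in preprocessing.} Their assignment does not affect the search, because those reductions were certified equivalent.
\end{itemize}

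With these choices, each $D_j(e)$ contains $\underline{x}_j(p)$. Hence $e$ is compatible and $\underline{x}(p)\in D(e)$, and by construction $e$ extends $p$. Its objective value is exactly $\operatorname{LB}_0(p)$, which by part 1 is a lower bound over all completions, so it is optimal for the node. Nothing below $p$ can improve on it, and the node can therefore be fathomed.

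\textbf{Main obstacle.} The delicate step is confirming that "completion" is consistent with the witnesses already fixed by $p$, rather than with arbitrary witnesses. The construction above resolves this by reusing $p$'s witnesses on $R(p)$. Those witnesses remain valid because $\underline{x}_j(p)\in D_j(p)$, and $D_j(p)$ is closed, so its minimum belongs to it.
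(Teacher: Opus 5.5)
Your proof is correct and follows the paper's argument. Part 1 uses the same domain-containment plus $c\geq0$ reasoning, except that you get $D_j(q)\subseteq D_j(p)$ directly from \eqref{eq_partialdomain} rather than via Lemma \ref{lem_node}. In part 2 you build a compatible $e$ extending $p$ with $\underline{x}(p)\in D(e)$, which makes explicit the membership that the paper's one-line argument takes for granted. The only loose spot is the bullet on preprocessed rows, which needs no appeal to certified equivalence: since $\underline{x}(p)$ is feasible for the original instance, Proposition \ref{prop_feas} supplies witnesses for those rows exactly as for the unassigned active rows.
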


\begin{proof}
Every feasible completion $x$ satisfies $x_j\in D_j(p)$, hence $x_j\geq\min D_j(p)$ and $c^Tx\geq\operatorname{LB}_0(p)$. If $q$ extends $p$, Lemma \ref{lem_node} gives $D_j(q)\subseteq D_j(p)$ and therefore $\min D_j(q)\geq\min D_j(p)$. Finally, if $\underline{x}(p)$ is itself feasible, it attains the basic lower bound; no feasible completion of the node can have smaller cost.
\end{proof}

If an incumbent of value $z^*$ is available, a node is pruned whenever $\operatorname{LB}_g(p)\geq z^*$. Before branching, $\underline{x}(p)$ is tested against the original relational rows so that Proposition \ref{prop_bound} can close a node as soon as its lower bound is attainable.

When branching is necessary, choose an unassigned row with the smallest current witness set,
\[
i^*\in\arg\min_{i\in\mathcal{A}\setminus R(p)}|J_i(p)|.
\]
Among ties, a row with larger $\delta_i(p)$ is preferred. For each $j\in J_{i^*}(p)$, the child assigns $p(i^*)=j$ and intersects $D_j(p)$ with $S'_{i^*j}$. Children are explored in nondecreasing greedy packing-bound order. These choices affect search efficiency but not correctness.

\begin{theorem}[correctness of clause-witness branch-and-bound]\label{thm_bb}
Assume that the scalar sets in \eqref{eq_domain} and all subsequent set operations are computed exactly, that $c\geq0$, and that preprocessing uses only equivalence-preserving reductions. Then the branch-and-bound procedure based on \eqref{eq_partialdomain}--\eqref{eq_lb}, repeated forced-witness propagation, the greedy disjoint-witness packing bound, lower-point closure, and complete branching over $J_i(p)$ terminates and returns a globally optimal solution of \BFMWSAT\ whenever the problem is feasible; otherwise it returns infeasibility.
\end{theorem}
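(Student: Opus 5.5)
The argument combines three ingredients: termination, soundness of every pruning or closing rule, and completeness of the branching. It uses the witness representation of Theorem~\ref{thm_witnessregions} as the reference object. Write $\mathcal{E}(p)$ for the set of compatible complete witness assignments extending node $p$. Since every feasible $x$ lies in some $D(e)$, the optimal value of a node is $\min_{e\in\mathcal{E}(p)} c^Tx^*(e)$. The root is the empty assignment on the active rows $\mathcal{A}$. Because preprocessing is assumed equivalence-preserving, the root problem has the same feasible set and optimum as \eqref{eq_bfmwsat}. Rows removed because they are automatically witnessed (fixed variable, $S'_{ij}=I_j$, dominance) are still satisfied by every point of the retained domains. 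This holds because $I_j$ is never enlarged, and I would check this separately for each of the three rules.

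\textbf{Termination.} Each branching step assigns a new row of the finite set $\mathcal{A}$, and so does each propagation step. Along any root-to-leaf path the depth is therefore at most $|\mathcal{A}|$. The branching factor is at most $\max_i|J_i|\le n$. The tree is thus finite, and each node performs finitely many exact set operations on finite unions of closed intervals (Proposition~\ref{prop_intervalstructure}).

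\textbf{Soundness of discarding.} I treat each operation in turn.
\begin{itemize}
\item Nodes with an empty $D_j(p)$ or an empty $J_i(p)$ are discarded. This loses nothing, since they have $\mathcal{E}(p)=\varnothing$ by Lemma~\ref{lem_node}(1),(2).
\item Forced-witness propagation leaves $\mathcal{E}(p)$ unchanged by Lemma~\ref{lem_node}(3). Applied repeatedly, it reaches the same set of completions.
\item Bound pruning with $\operatorname{LB}_g(p)\ge z^*$ discards only nodes whose completions all cost at least the incumbent value. This follows from Proposition~\ref{prop_packing}, with the greedy set $K_g(p)$ being a valid disjoint-witness packing by construction.
\item Lower-point closure records a feasible point that is optimal for its node, by Proposition~\ref{prop_bound}(2).
\end{itemize}
At a leaf with $R(p)=\mathcal{A}$ and all domains nonempty, $p$ itself is compatible. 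I would then argue that $\underline{x}(p)$ is feasible. Each active row $i$ has $\underline{x}_{p(i)}(p)\in D_{p(i)}(p)\subseteq S'_{i,p(i)}$, and every coordinate lies in $I_j$. The removed rows are handled by the preprocessing argument above. Proposition~\ref{prop_feas} then gives feasibility, so leaves are always closed by lower-point closure with their exact optimum.

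\textbf{Completeness of branching.} This is the main obstacle. Branching on $i^*$ over $J_{i^*}(p)$ partitions $\mathcal{E}(p)$: every completion assigns $i^*$ to some $j$ with $D_j(p)\cap S'_{i^*j}\neq\varnothing$, that is, to some $j\in J_{i^*}(p)$. The children's completion sets therefore cover $\mathcal{E}(p)$.

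The delicate point is that the subtree decomposition uses witness assignments on $\mathcal{A}$ and not on the full row set $\mathcal{I}$. The optimum over $\bigcup_e D(e)$, restricted to $\mathcal{A}$ with domains drawn from $I_j$, must coincide with the original optimum. I would establish this by combining Theorem~\ref{thm_witnessregions} with the preprocessing equivalence: a feasible point of the reduced system restricted to $I$ satisfies the removed rows.

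With that in place, an induction over the finite tree finishes the argument. The best value recorded among closed nodes, together with the nodes pruned by bound, never exceeds $\min_{e}c^Tx^*(e)$. Hence the final incumbent is globally optimal. If no incumbent is ever produced, every branch died by Lemma~\ref{lem_node}, so the set of compatible assignments is empty and the problem is infeasible by Theorem~\ref{thm_witnessregions}(1). Branching order and child order affect only which incumbent appears first, not this argument.
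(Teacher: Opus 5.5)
Your proposal is correct and follows essentially the same route as the paper's proof. Both rest on finiteness because every forced or branching step assigns a new active row, safety of each discard and closure rule via Lemma~\ref{lem_node} and Propositions~\ref{prop_packing} and~\ref{prop_bound}, and completeness because every compatible complete witness assignment keeps a consistent child at each branching, so Theorem~\ref{thm_witnessregions} yields global optimality; your extra checks that a fully assigned node is always closed by lower-point closure (since $\underline{x}(p)$ is then feasible) and that rows removed in preprocessing stay satisfied, so the search over $\mathcal{A}$ matches the full witness representation, make explicit two points the paper passes over quickly.
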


\begin{proof}
Only finitely many active rows can be assigned. Every forced or branching step assigns at least one previously unassigned row, so propagation terminates and every root-to-leaf path is finite. Since each row has finitely many effective witnesses, the complete search tree is finite.

Consider any compatible complete witness assignment $e$. At every partial node $p$ on the path consistent with $e$, Lemma \ref{lem_node} guarantees that the witness $e(i)$ of an unassigned row remains in $J_i(p)$; hence complete branching contains a child consistent with $e$. Empty-domain and empty-witness pruning cannot remove such a path. Singleton propagation is also safe because Lemma \ref{lem_node} shows that its assignment is necessary for every completion of the current node.

By Proposition \ref{prop_packing}, pruning with the greedy packing bound removes only nodes whose feasible completions cannot improve the incumbent. If the lower point of a partial node is feasible, Proposition \ref{prop_bound} proves that it is already the optimum over that node, so fathoming it loses no improving solution. If the search reaches a complete compatible assignment, Theorem \ref{thm_witnessregions} shows that the coordinatewise domain minima give the exact optimum over its witness region.

Finally, Theorem \ref{thm_witnessregions} states that every feasible solution belongs to at least one compatible complete witness region. Therefore a globally optimal region either is explicitly reached or is safely fathomed/pruned by a valid bound after an incumbent of no larger value has been found. The best incumbent returned by the finite search is consequently globally optimal. If no compatible region survives, the instance is infeasible.
\end{proof}

\begin{remark}
The worst-case search remains exponential, which is unavoidable in view of the NP-complete crisp feasibility specialization \cite{lijin2016}. The algorithmic contribution is instead the organization of this search around clause witnesses: incompatible domain intersections are rejected before completion, singleton witnesses propagate, disjoint witness sets yield additive objective increments, and a feasible lower point closes a node without enumerating all witness completions below it.
\end{remark}

% ==================================== Algorithm ====================================
\section{Solution algorithm}\label{sec_algorithm}
Algorithm \ref{alg_bfmwsat} summarizes the complete method. Proposition \ref{prop_intervalstructure} reduces the scalar stage to interval-set calculations, while Sections \ref{sec_structure}--\ref{sec_algorithmic} justify the combinatorial search. The origin of the fuzzy inputs is immaterial once $A^+$, $A^-$, $b$, and the chosen t-norm have been supplied.

\begin{algorithm}[H]
\footnotesize
\caption{Exact clause-witness algorithm for \BFMWSAT}\label{alg_bfmwsat}
\KwIn{$A^+$, $A^-$, $b$, $c\geq0$, a continuous t-norm $\ph$, and exact scalar-set routines.}
\KwOut{A global optimum $(x^*,z^*)$, or infeasibility.}
Compute $I_{ij}$, $S_{ij}$, $I_j$, $S'_{ij}$, and $J_i$\;
\If{some $I_j=\varnothing$ or $J_i=\varnothing$}{\Return infeasible\;}
Set the active-row set $\mathcal{A}\leftarrow\Iset$ and apply the safe preprocessing rules of Section \ref{sec_algorithmic}; keep all $I_j$ fixed\;
Initialize the root node $p$ with $R(p)=\varnothing$ and $D_j(p)=I_j$; set $z^*\leftarrow+\infty$, $x^*\leftarrow\varnothing$\;
Insert the root into the search list\;
\While{the search list is nonempty}{
  Remove a node $p$\;
  Recompute $J_i(p)$ and repeatedly propagate every unassigned row with $|J_i(p)|=1$ until closure\;
  \If{some current $D_j(p)$ or $J_i(p)$ is empty}{continue;}
  Set $\underline{x}_j\leftarrow\min D_j(p)$ and $z_0\leftarrow\sum_j c_j\underline{x}_j$\;
  \If{$z_0\geq z^*$}{continue;}
  \If{$\underline{x}$ satisfies every original row}{
      Set $(x^*,z^*)\leftarrow(\underline{x},z_0)$; continue\;
  }
  Compute $\delta_i(p)$ and current $J_i(p)$ for the unassigned active rows\;
  Build the greedy disjoint-witness packing $K_g(p)$ and set $\operatorname{LB}_g(p)$ from \eqref{eq_lb}\;
  \If{$\operatorname{LB}_g(p)\geq z^*$}{continue;}
  Choose an unassigned $i^*$ minimizing $|J_i(p)|$, breaking ties by larger $\delta_i(p)$\;
  Generate one child for every $j\in J_{i^*}(p)$ by assigning $p(i^*)=j$ and replacing $D_j(p)$ with $D_j(p)\cap S'_{i^*j}$\;
  Insert the children in nondecreasing greedy packing-bound order\;
}
\Return{$(x^*,z^*)$ if $x^*\neq\varnothing$; otherwise infeasible}\;
\end{algorithm}

A depth-first implementation with best-child-first ordering keeps memory small, whereas a priority queue keyed by the lower bound gives a best-bound strategy. Theorem \ref{thm_bb} applies to either policy provided every unpruned node is eventually processed. The exactness claim concerns the mathematical set operations; a numerical implementation should use closed-form endpoints or certified tolerances appropriate to the selected t-norm rather than silently treating floating-point equality as exact.

% ==================================== Numerical example ====================================
\section{Detailed numerical example}\label{sec_example}
This section illustrates the method numerically. The example is deliberately small enough to show every relevant step while still exhibiting variable fixing, row elimination, branching, and lower-bound pruning. The minimum t-norm $\ph(u,v)=\min\{u,v\}$ is used, but the algorithm itself is not restricted to this choice.

Consider five graded clause requirements and six variables. Let
\begin{equation}\label{eq_exampledata}
A^+=\begin{bmatrix}
0.80&0.10&0.10&0.10&0.10&0.90\\
0.10&0.80&0.10&0.10&0.10&0.10\\
0.10&0.10&0.10&0.10&0.10&0.10\\
0.10&0.10&0.10&0.90&0.80&0.85\\
0.90&0.10&0.10&0.10&0.10&0.10
\end{bmatrix},~
A^-=\begin{bmatrix}
0.10&0.10&0.10&0.85&0.10&0.10\\
0.10&0.10&0.10&0.10&0.10&0.10\\
0.85&0.90&0.10&0.10&0.80&0.10\\
0.10&0.10&0.10&0.10&0.10&0.10\\
0.10&0.10&0.10&0.90&0.80&0.85
\end{bmatrix},
\end{equation}
with
\[
b=(0.60,0.40,0.60,0.60,0.60)^T,
~
c=(5,2,4,3,6,1)^T.
\]
The objective is to minimize $c^Tx$.

For the minimum t-norm, the pairwise admissibility and activation sets are easy to calculate from the threshold structure of $\min$. Intersecting the row-wise admissibility sets gives
\begin{equation}\label{eq_exampledomains}
I_1=[0.4,0.6],~ I_2=\{0.4\},~ I_3=[0,1],~
I_4=I_5=I_6=[0.4,0.6].
\end{equation}
The nonempty effective activation sets are listed in Table \ref{tab_activation}; all omitted entries are empty.

\begin{table}[!ht]
\centering
\caption{Nonempty effective clause-witness sets for the numerical example.}\label{tab_activation}
\begin{tabular}{c|cccccc}
\hline
 & $x_1$ & $x_2$ & $x_3$ & $x_4$ & $x_5$ & $x_6$\\
\hline
row 1 & $\{0.6\}$ & -- & -- & $\{0.4\}$ & -- & $\{0.6\}$\\
row 2 & -- & $\{0.4\}$ & -- & -- & -- & --\\
row 3 & $\{0.4\}$ & $\{0.4\}$ & -- & -- & $\{0.4\}$ & --\\
row 4 & -- & -- & -- & $\{0.6\}$ & $\{0.6\}$ & $\{0.6\}$\\
row 5 & $\{0.6\}$ & -- & -- & $\{0.4\}$ & $\{0.4\}$ & $\{0.4\}$\\
\hline
\end{tabular}
\end{table}

The first preprocessing effect follows immediately from \eqref{eq_exampledomains}: $I_2=\{0.4\}$, so every feasible solution has $x_2=0.4$. At this value, row 2 is activated by its positive branch and row 3 is activated by its negative branch. Hence both rows are already satisfied and can be removed from the remaining witness search. Variable $x_2$ is retained at its fixed value when the full solution is reconstructed.

After this reduction, the unresolved rows are 1, 4, and 5. Their effective witness choices are
\[
J_1=\{1,4,6\},~
J_4=\{4,5,6\},~
J_5=\{1,4,5,6\}.
\]
Before any of these rows is assigned, the coordinatewise minimum of the current domains is
\[
\underline{x}=(0.4,0.4,0,0.4,0.4,0.4)^T.
\]
Its objective value is
\[
5(0.4)+2(0.4)+4(0)+3(0.4)+6(0.4)+1(0.4)=6.8,
\]
which is the basic domain bound $\operatorname{LB}_0$ at the root. Rows 1 and 5 are already witnessed at this lower point, whereas row 4 is not. For row 4 the three possible witnesses require $x_4=0.6$, $x_5=0.6$, or $x_6=0.6$, with incremental costs
\[
3(0.6-0.4)=0.6,~
6(0.6-0.4)=1.2,~
1(0.6-0.4)=0.2.
\]
Hence $\delta_4=0.2$, while the already witnessed rows have zero incremental requirement. Moreover, $J_1$, $J_4$, and $J_5$ intersect pairwise, so a disjoint-witness packing can contain at most one of these rows. The greedy packing therefore selects row 4 and the bound in \eqref{eq_lb} is $6.8+0.2=7.0$. Row 4 is tied with row 1 in witness-set cardinality but has the larger $\delta_i$, so the tie-breaking rule selects row 4. The cost-guided branch order explores $x_6=0.6$ first.

At the resulting node the coordinatewise lower point has $x_6=0.6$ and all other variables at their current minima. Row 1 is witnessed by the negative branch associated with $x_4=0.4$, and row 5 is witnessed by the same value. Thus the node lower point is already feasible. Proposition \ref{prop_bound} closes the node immediately and yields
\begin{equation}\label{eq_examplestar}
x^*=(0.4,0.4,0,0.4,0.4,0.6)^T,
~ c^Tx^*=7.0.
\end{equation}
This solution becomes the incumbent and is the exact optimum of the first child node.

The two remaining branches of row 4 can now be bounded immediately. Choosing $x_4=0.6$ gives a lower bound of $6.8+0.6=7.4$, while choosing $x_5=0.6$ gives $6.8+1.2=8.0$. Both bounds exceed the incumbent value $7.0$, so both branches are pruned. No additional complete witness assignments need to be generated.

For completeness, direct substitution verifies the final vector. Row 1 attains $0.6$ through the negative branch associated with $x_4=0.4$; row 2 attains $0.4$ through the positive branch of $x_2=0.4$; row 3 attains $0.6$ through the negative branch of the same variable; row 4 attains $0.6$ through the positive branch of $x_6=0.6$; and row 5 attains $0.6$ through the negative branch associated with $x_4=0.4$. All other contributions stay below the corresponding right-hand side. Thus \eqref{eq_examplestar} is feasible, and the branch-and-bound lower bounds prove that no feasible solution has smaller cost.

This example also illustrates why the clause-witness view is computationally useful. A generic complete-region construction would first describe all compatible ways of assigning witnesses to the remaining rows. The proposed search instead finds a low-cost complete witness assignment early and uses its objective value to discard alternative witness choices before their descendants are generated.

\section{Computational experiments}\label{sec_computational}

The computational study has four purposes: to verify the implementation against independent exact formulations, to quantify the effect of BFRE preprocessing, to isolate the contribution of the disjoint-witness packing bound and other search mechanisms, and to examine the scalability of the proposed approach on crisp and genuinely graded instances. The implementation package accompanying the paper reproduces the instance generators, experimental settings, and reported tables. The generic mixed-integer baseline was solved using the HiGHS optimization engine through the \texttt{scipy.optimize.milp} interface \cite{huangfu2018}. Fixed random seeds were used throughout to ensure reproducibility.

\subsection{Exactness checks and preprocessing}

Two independent checks were performed before studying search performance. First, small graded instances with $n=8$, $m=10$, and three intended witnesses per row were generated for the minimum, product, and Lukasiewicz t-norms. For five instances of each t-norm, the proposed implementation was compared with explicit enumeration of complete witness assignments. All 15 objective values agreed. The enumeration baseline visited a median of $88{,}573$ search nodes and generated $59{,}049$ complete witness assignments, whereas the proposed implementation visited a median of one node and therefore typically closed these deliberately reduction-friendly validation instances at the root. This experiment is intended as a correctness check rather than as a runtime comparison.

A second family with $n=16$ and $m=24$ was used to isolate preprocessing. Five instances were generated for each of the three t-norms. Table \ref{tab_preprocessing} reports medians. The exact MILP baseline agreed with the proposed implementation on all 15 instances. The generated family intentionally contains fixed-variable and redundant-row structure; preprocessing therefore removes all active rows in the median case, while disabling preprocessing requires a median of 73 search nodes.

\begin{table}[!ht]
	\centering
	\caption{Effect of preprocessing on graded validation instances ($n=16$, $m=24$, five instances per t-norm).}
	\label{tab_preprocessing}
	\begin{tabular}{lrrrr}
		\hline
		T-norm & Fixed vars. & Active rows after pre. & Nodes with pre. & Nodes without pre.\\
		\hline
		Minimum & 12 & 0 & 1 & 73\\
		Product & 12 & 0 & 1 & 73\\
		Lukasiewicz & 11 & 0 & 1 & 73\\
		\hline
	\end{tabular}
\end{table}

\subsection{Ablation of the clause-witness search}

The main ablation uses eight genuinely graded minimum-t-norm instances with $n=12$, $m=50$, and three candidate witnesses per row. Each variant receives the same two-second time limit. The \emph{full packing} variant is Algorithm \ref{alg_bfmwsat}. The \emph{single-row bound} keeps only $\operatorname{LB}_0+\max_i\delta_i$; \emph{domain bound only} uses $\operatorname{LB}_0$ without a witness increment; the remaining variants disable lower-point closure or forced-witness propagation, respectively.

\begin{table}[!ht]
	\centering
	\caption{Ablation on eight graded $12\times50$ instances. Times are wall-clock medians over all runs.}
	\label{tab_ablation}
	\begin{tabular}{lrrrr}
		\hline
		Variant & Certified & Median nodes & Median time (s) & Median forced\\
		\hline
		Full packing & 8/8 & 43.5 & 0.081 & 6.0\\
		Single-row bound & 8/8 & 79.0 & 0.162 & 11.0\\
		Domain bound only & 5/8 & 293.5 & 0.271 & 72.5\\
		No lower-point closure & 8/8 & 139.0 & 0.142 & 8.5\\
		No propagation & 8/8 & 44.5 & 0.091 & 0.0\\
		\hline
	\end{tabular}
\end{table}

Relative to the single-row bound, the disjoint-witness packing bound reduces the median number of explored nodes by about 45\% and the median runtime by about 50\% on this ablation set. The principle is related to lower-bound constructions in MaxSAT, where disjoint inconsistent structures are identified to obtain additive lower bounds without double counting \cite{li2006disjoint}. However, the present bound is defined over unresolved BF-MWSAT rows and their disjoint witness-variable sets, rather than over inconsistent Boolean subformulas. Removing all witness-aware increments is substantially more damaging: three of the eight runs reach the time limit without certifying optimality. Lower-point closure also has a clear effect. Forced-witness propagation is less influential on this particular family, although it remains an exact and inexpensive reduction and becomes more active in larger instances.

\subsection{Generated crisp scalability}

For the crisp specialization, satisfiable planted 3-CNF instances were generated at approximately $4.55n$ clauses, with independent integer variable costs in $\{1,\ldots,10\}$. The density is comparable to the small uniform Random-3-SAT family \texttt{uf20-91} reported by SATLIB \cite{hoos2000satlib}, although the present planted instances are generated independently and are not SATLIB instances. The BFRE implementation is compared with a binary MILP formulation of minimum-weight SAT. Table \ref{tab_crisp_scale} shows that the optimum agrees with MILP on every generated instance for which an incumbent is reported. One $20\times91$ case reaches the three-second BFRE limit after finding the MILP-optimal incumbent but before completing the proof of optimality.

\begin{table}[!ht]
	\centering
	\caption{Crisp planted 3-CNF scalability with a three-second BFRE time limit.}\label{tab_crisp_scale}
	\begin{tabular}{rrrrrr}
		\hline
		$n$ & $m$ & Instances & Certified & Median nodes & Median BFRE / MILP time (s)\\
		\hline
		12 & 55 & 5 & 5 & 16 & 0.055 / 0.010\\
		20 & 91 & 5 & 4 & 108 & 0.461 / 0.075\\
		28 & 127 & 4 & 4 & 199 & 1.409 / 0.154\\
		\hline
	\end{tabular}
\end{table}

\subsection{External SATLIB check}

To complement the generated crisp instances with formulas not constructed by the study, we also tested the first five satisfiable formulas \texttt{uf20-01.cnf}--\texttt{uf20-05.cnf} from the SATLIB \texttt{uf20-91} family \cite{hoos2000satlib}. Each formula has 20 variables and 91 clauses. The benchmark files were used in their original DIMACS clause form and converted to the crisp BF-MWSAT representation of Theorem \ref{thm_crisp}. Two objective schemes were considered. The \emph{unit} scheme sets all costs to one and therefore measures Min-Ones SAT. The \emph{weighted} scheme uses a fixed integer cost vector in $\{1,\ldots,10\}^{20}$ generated once with a fixed seed; the same vector is used for all five formulas. Each BF-MWSAT run has a ten-second limit and is checked against the binary MILP baseline.

\begin{table}[!ht]
	\centering
	\caption{External SATLIB \texttt{uf20-91} check on the first five satisfiable formulas. Times are wall-clock medians over the five formulas.}\label{tab_satlib_external}
	\begin{tabular}{lrrrrr}
		\hline
		Cost scheme & Instances & Certified & Optimal incumbents & Median nodes & Median BFRE / MILP time (s)\\
		\hline
		Unit & 5 & 5 & 5 & 66 & 0.139 / 0.018\\
		Weighted & 5 & 4 & 5 & 86 & 0.159 / 0.018\\
		\hline
	\end{tabular}
\end{table}

All five unit-cost cases are certified, and all five objectives agree with MILP. Under the fixed weighted costs, four cases are certified within ten seconds; on \texttt{uf20-02.cnf}, the BF-MWSAT search reaches the time limit after finding the same objective value as MILP but before completing the optimality proof. This small external experiment is not intended as a broad SAT competition study. Its role is to verify that the crisp embedding and optimization procedure behave consistently on standard formulas that were not generated to favor the proposed structure.

\subsection{Genuinely graded scalability}

A harder graded minimum-t-norm family was then generated. Each row has three bipolar candidate witnesses, fuzzy requirement levels selected from $\{0.4,0.5,0.6\}$, and a planted feasible point. Table \ref{tab_graded_scale} reports the same three-second limit. All instances at $12\times50$ and $16\times70$ are certified by the proposed implementation. At $20\times91$ and $24\times110$, the prototype increasingly reaches the time limit; the incumbent matches the MILP optimum in three of four and one of three cases, respectively, but these matches are not counted as certificates. The compiled HiGHS baseline is faster at these sizes. Thus, the computational contribution is the structural exact search and its reductions, not a claim of state-of-the-art performance against a mature compiled MIP solver.

\begin{table}[!ht]
	\centering
	\caption{Genuinely graded minimum-t-norm scalability with a three-second BFRE time limit.}\label{tab_graded_scale}
	\begin{tabular}{rrrrrrr}
		\hline
		$n$ & $m$ & Instances & Certified & Optimal incumbents & Median nodes & Median BFRE / MILP time (s)\\
		\hline
		12 & 50 & 5 & 5 & 5 & 44 & 0.107 / 0.031\\
		16 & 70 & 5 & 5 & 5 & 69 & 0.181 / 0.036\\
		20 & 91 & 4 & 0 & 3 & 926 & 3.002 / 0.322\\
		24 & 110 & 3 & 0 & 1 & 703 & 3.002 / 1.114\\
		\hline
	\end{tabular}
\end{table}

The experiments support four conclusions. First, the interval-set and witness implementation reproduces independent exact baselines on the validation instances. Second, BFRE preprocessing and lower-point closure can collapse substantial portions of the search before complete witness assignments are formed. Third, the new packing bound provides a measurable improvement over the single-row bound while preserving exactness. Fourth, the SATLIB check confirms the crisp formulation on external formulas not generated by the study. The larger graded cases also show the present limitation clearly: further implementation engineering, stronger bounds, decomposition, and broader external benchmark suites are natural next steps rather than claims already established by the current prototype.

% ==================================== Conclusion ====================================
\section{Conclusion}

This paper introduced the bipolar fuzzy minimum-weight satisfiability problem, a continuous optimization model that extends classical minimum-weight CNF satisfiability through bipolar fuzzy relational equalities. The proposed formulation preserves the classical problem as a crisp special case while providing a graded framework in which clause satisfaction is represented through continuous-t-norm interactions between variables and clause witnesses.

The structure of the proposed model was characterized through admissible domains, effective clause witnesses, and compatible witness regions. These properties enabled the development of an exact branch-and-bound algorithm that combines BFRE-based preprocessing, witness propagation, lower-point closure, and a disjoint-witness packing bound. The proposed bound exploits the additive contribution of independent unresolved constraints and provides a stronger pruning mechanism than a single-row lower bound while preserving global optimality.

The computational study verified the correctness of the implementation through comparisons with explicit witness enumeration and mixed-integer optimization formulations. The experiments demonstrated that preprocessing, lower-point closure, and the packing bound can substantially reduce the search effort. The external SATLIB evaluation further confirmed that the crisp specialization behaves consistently on benchmark formulas that were not generated specifically for the proposed framework.

The current results establish a structural and algorithmic foundation for solving bipolar fuzzy minimum-weight satisfiability problems. Future research may investigate more advanced decomposition strategies, stronger lower bounds, parallel search procedures, and larger benchmark collections to improve scalability while preserving the exact nature of the method.

% ==================================== References ====================================
\printbibliography
\end{document}